\newcommand{\w}{\omega}
\newcommand{\e}{\varepsilon}
\newcommand{\IR}{\mathbb R}
\newcommand{\IN}{\mathbb N}
\newcommand{\Ra}{\Rightarrow}
\newtheorem{example}{Example}
\newtheorem{theorem}{Theorem}
\newtheorem{proposition}{Proposition}
\newtheorem{corollary}{Corollary}
\newtheorem{problem}{Problem}
\theoremstyle{definition}
\begin{document}

\title{Metrizable  quotients of $C_p$-spaces}
\author{T. Banakh, J. K{\c{a}}kol  and W. \'Sliwa}
\address{Faculty of Mechanics and Mathematics, Ivan Franko National University of Lviv, Universytetska St. 1, Lviv 79000, Ukraine 2, and  Institute of Mathematics, Jan Kochanowski University in Kielce, ul. Swi\c etokrzyska 15, 25-406 Kielce, Poland}

\email{t.o.banakh@gmail.com}

\address{Faculty of Mathematics and Informatics. A. Mickiewicz University  61-614 Pozna{\'{n}} , and Institute of Mathematics Czech Academy of Sciences, Prague}

\email{kakol@amu.edu.pl}
\address{Faculty of Mathematics and Natural Sciences,
University of Rzesz\'ow%
\newline
\indent%
35-310 Rzesz\'ow, Poland }
\email{sliwa@amu.edu.pl; wsliwa@ur.edu.pl}

\subjclass{54C35, 54E35}
\keywords{function space, quotient space, Efimov space, metrizable quotient}

\begin{abstract}
The famous  Rosenthal-Lacey theorem asserts that  for each infinite compact set $K$ the  Banach space $C(K)$ admits a quotient which is either a copy of $c$ or $\ell_{2}$. What is the case when the uniform topology of $C(K)$ is replaced  by the pointwise topology? Is it true that $C_p(X)$ always has an infinite-dimensional separable (or better metrizable) quotient? In this paper we prove that for a Tychonoff space $X$ the function space $C_p(X)$ has an infinite-dimensional metrizable quotient if $X$ either contains an infinite discrete $C^*$-embedded subspace or else $X$ has a sequence $(K_n)_{n\in\IN}$ of compact subsets such that for every $n$ the space $K_n$ contains two disjoint topological copies of $K_{n+1}$. Applying the latter result, we show that under $\lozenge$ there exists a zero-dimensional Efimov space $K$ whose function space $C_{p}(K)$ has an infinite-dimensional metrizable quotient. These two theorems essentially improve earlier results of K\c akol and Sliwa on infinite-dimensional separable quotients of $C_p$-spaces.
\end{abstract}
\maketitle

\section{Introduction}
Let $X$ be  a Tychonoff space. By $C_{p}(X)$ and $C_{c}(X)$ we denote the space of real-valued continuous functions on $X$ endowed with the pointwise and the compact-open topology, respectively.

The classic Rosenthal-Lacey theorem, see \cite{Ro}, \cite{JR},  and \cite{La},  asserts that the Banach space $C(K)$  of continuous real-valued maps on an infinite compact space $K$ has a  quotient  isomorphic to $c$ or $\ell_{2}.$

This theorem motivates the following  natural question (first discussed in \cite{kakol-sliwa}):

\begin{problem}[K\c akol, Sliwa]\label{prob1} Does  $C_{p}(K)$ admit an infinite-dimensional separable quotient for any infinite compact space $K$?
\end{problem}

In particular, does  $C_{p}(\beta\mathbb{N})$ admit an infinite-dimensional separable quotient (shortly $SQ$)? Our main theorem of \cite[Theorem 4]{kakol-sliwa}  showed  that $C_{p}(K)$ has $SQ$ for any compact space $K$ containing a copy of $\beta\mathbb{N}$. Consequently, this theorem  reduces  Problem~\ref{prob1} to the case when $K$ is an \emph{Efimov space} (i.e. $K$ is an infinite compact space that contains neither a non-trivial  convergent sequence nor a copy of $\beta\mathbb{N}$). Although, it is unknown if Efimov spaces exist in ZFC (see  \cite{dow}, \cite{dow1}, \cite{dow2}, \cite{efimov}, \cite{fedorczuk1},  \cite{fedorczuk2},  \cite{geschke},  \cite{hart})  we showed in \cite{kakol-sliwa} that under $\lozenge$ for some  Efimov spaces $K$ the function space $C_{p}(K)$ has $SQ$.

 On the other hand, in \cite{sa2} it was shown   that  $C_{p}(K)$   has an \emph{infinite-dimensional separable quotient algebra} if and only if  $K$ contains an infinite countable closed subset. Hence $C_{p}(\beta\mathbb{N})$ lacks infinite-dimensional separable quotient algebras.

Clearly Problem~\ref{prob1} is  motivated by Rosenthal-Lacey theorem, but  one can provide  more specific  motivations. Indeed, although it is unknown whether $C_c(X)$ or $C_p(X)$ always has $SQ$,  some partial results are known:  If $X$ is of pointwise countable type, then $C_c(X)$ has a quotient isomorphic to either $\mathbb{R}^{\mathbb{N}}$ or $c$ or $\ell_{2}$, see \cite[Corollary 22]{sa2}. Also $C_{c}(X)$ has $SQ$ provided $C_{c}(X)$ is barrelled, \cite{sa1}.  Recall that all first countable spaces and all locally compact spaces are of  pointwise countable type, see \cite{engel}.

In \cite[Corollary 11]{sa2} we proved that for a fixed Tychonoff space $X$, if $C_{p}(X)$ has $SQ$, then also $C_{c}(X)$ has $SQ$. Conversely, if $C_{c}(X)$ has $SQ$ and for every infinite compact $K\subset X$ the space $C_{p}(K)$ has $SQ$, then $C_{p}(X)$ has also $SQ$. Indeed,
two cases are possible.
\begin{enumerate}
\item \emph{Every compact subset of $X$ is finite. } Then $C_{p}(X)=C_{c}(X)$ is barrelled and \cite{sa1} applies to get that $C_{p}(X)$ has $SQ$.
\item \emph{$X$ contains an infinite compact subset $K$. } Then $C_{p}(K)$ has $SQ$ (by assumption). Since the restriction map $f\rightarrow f|K$, $f\in C(X)$ is  a continuous open surjection from $C_{p}(X)$ onto $C_{p}(K)$, the desired conclusion  holds.
\end{enumerate}
 Let $X$ be a Tychonoff space. First observe that each metrizable quotient of $C_p(X)$ is separable. Indeed, this follows from the separability of metizable spaces of countable cellularity and the fact that  $C_p(X)$ has countable cellularity, being a dense subspace of $\IR^X$, see \cite{Arhangel}.

Although Problem~\ref{prob1} is still left open, but being motivated by the above mentioned partial  results one can formulate the following much  stronger version of thi question.

\begin{problem}\label{problem}
For which Tychonoff spaces $X$ the function space $C_{p}(X)$ admits a metrizable infinite-dimensional quotient?
\end{problem}

This problem has a simple solution for Tychonoff spaces which are not pseudocompact. We recall that a Tychonoff space $X$ is {\em pseudocompact} if each continuous real-valued function on $X$ is bounded.

\begin{proposition}\label{first} For a Tychonoff space $X$ the following conditions are equivalent:
\begin{enumerate}
\item $X$ is not pseudocompact;
\item $C_p(X)$ has a subspace, isomorphic to $\mathbb R^{\mathbb N}$;
\item $C_p(X)$ has a quotient space, isomorphic to $\mathbb R^{\mathbb N}$;
\item $C_p(X)$ admits a linear continuous map onto $\mathbb R^{\mathbb N}$.
\end{enumerate}
\end{proposition}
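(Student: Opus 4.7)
The plan is to derive $(1) \Rightarrow (2), (3), (4)$ from a single construction, to obtain the reverse implications $(2) \Rightarrow (1)$ and $(4) \Rightarrow (1)$ from an obstruction based on $\sigma$-boundedness, and to note that $(3) \Rightarrow (4)$ is trivial, closing the equivalence.

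For the forward direction, pick a continuous unbounded $\varphi : X \to \IR$ and select inductively points $x_n \in X$ with $\varphi(x_n)$ strictly increasing and with consecutive gaps $> 1$. The sets $U_n := \varphi^{-1}\bigl((\varphi(x_n)-\tfrac12, \varphi(x_n)+\tfrac12)\bigr)$ are then pairwise disjoint and form a locally finite family in $X$: any $y \in X$ has a neighbourhood on which $\varphi$ is bounded, and that neighbourhood meets only finitely many $U_n$. Use the Tychonoff axiom to pick $f_n \in C(X)$ with $f_n(x_n)=1$ and $f_n\equiv 0$ outside $U_n$, so $(\supp f_n)$ is locally finite. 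Define $r : C_p(X) \to \IR^\IN$ by $r(f) = (f(x_n))_n$ and $s : \IR^\IN \to C_p(X)$ by $s((a_n)) = \sum_n a_n f_n$; at each point of $X$ this sum has only finitely many nonzero terms, which makes $s((a_n))$ well-defined and continuous on $X$ and makes $s$ continuous into $C_p(X)$ (each evaluation $s((a_n))(x)$ is a finite linear combination of the coordinates $a_n$). Then $r$ and $s$ are continuous linear with $r \circ s = \mathrm{id}_{\IR^\IN}$. This gives (4) and also (2), since $s$ is a topological embedding whose inverse on $s(\IR^\IN)$ is the restriction of the continuous map $r$. For (3), the section $s$ makes $r$ open: for every open $U \ni 0$ in $C_p(X)$ the set $s^{-1}(U)$ is an open neighbourhood of $0$ in $\IR^\IN$ contained in $r(U)$, and openness at $0$ propagates to arbitrary open sets by linearity and translation invariance; hence $C_p(X)/\ker r \cong \IR^\IN$.

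For the converses, assume $X$ is pseudocompact and set $B := \{f \in C(X) : \sup_X|f| \le 1\}$. Then $B$ is pointwise bounded, hence bounded in $C_p(X)$, and pseudocompactness yields $C_p(X) = \bigcup_{n\in\IN} nB$, so $C_p(X)$ is a countable union of bounded sets. This property passes to every linear subspace (intersect the given bounded cover with the subspace) and to every continuous linear image. But $\IR^\IN$ fails it: a bounded subset of $\IR^\IN$ is contained in some product $\prod_i [-M_i, M_i]$, and a standard diagonal argument produces a point outside any prescribed countable union of such products. So $\IR^\IN$ can arise neither as a linear subspace nor as a continuous linear image of $C_p(X)$, which rules out (2) and (4).

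The only mildly delicate step is the construction of the disjoint locally finite neighbourhoods $(U_n)$; everything else is a routine verification in the category of locally convex spaces. The conceptual invariant driving the equivalence is $\sigma$-boundedness of $C_p(X)$: pseudocompactness forces it and $\IR^\IN$ does not have it, while non-pseudocompactness supplies precisely the closed discrete sequence and the locally finite family of bump functions on which restriction and finite-support extension serve simultaneously as a retraction onto and an embedding of $\IR^\IN$.
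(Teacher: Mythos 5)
Your proof is correct, and its forward half takes a genuinely different route from the paper's. The paper obtains $(1)\Rightarrow(2)$ by citing Theorem 14 of \cite{sa2} and then gets $(2)\Rightarrow(3)$ from the general fact that $\mathbb R^{\mathbb N}$ is complemented in every locally convex space containing it (\cite[Corollary 2.6.5]{bonet}); you instead build everything by hand: the unbounded function $\varphi$ produces a pairwise disjoint locally finite family $(U_n)$ and bump functions with $f_n(x_m)=\delta_{nm}$, and the pair $r(f)=(f(x_n))_n$, $s((a_n))=\sum_n a_nf_n$ with $r\circ s=\mathrm{id}_{\IR^\IN}$ yields the embedding, the surjection, and (via the continuous linear section $s$) the openness of $r$ simultaneously, so no external complementability theorem is needed. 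This is more self-contained and exhibits the complemented copy of $\IR^{\IN}$ explicitly, at the cost of being longer than the paper's citation-based argument. For the converse you use exactly the paper's invariant --- $\sigma$-boundedness of $C_p(X)$ for pseudocompact $X$ --- the only differences being that you also kill $(2)$ directly by noting that $\sigma$-boundedness passes to linear subspaces (the paper only needs $(4)\Rightarrow(1)$ to close the cycle of implications), and that you show $\IR^{\IN}$ is not $\sigma$-bounded by a diagonal argument where the paper invokes the Baire theorem; both are valid. One cosmetic point: an unbounded $\varphi$ need only be unbounded from below, so you should pass to $|\varphi|$ before extracting the sequence $x_n$ with $\varphi(x_n)$ increasing with gaps greater than $1$; this is immediate and does not affect the argument.
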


\begin{proof} The implication $(1)\Ra(2)$ is proved in Theorem 14 of \cite{sa2} and $(2)\Ra(3)$ follows from the complementability of $\mathbb{R}^{\mathbb{N}}$ in any locally convex space containing it, see \cite[Corollary 2.6.5]{bonet}. The implication $(3)\Ra(4)$ is trivial. To see that $(4)\Ra(1)$, observe that for a pseudocompact space $X$ the function space $C_p(X)$ is $\sigma$-bounded, which means that it can be written as the countable union $C_p(X)=\bigcup_{n=1}^\infty\{f\in C_p(X):\sup_{x\in X}|f(x)|\le n\}$ of bounded subsets. Then the image of $C_p(X)$ under any linear continuous operator also is $\sigma$-bounded. On the other hand, the Baire Theorem ensures that the space $\IR^{\IN}$ is not $\sigma$-bounded.
\end{proof}

The main results of paper are the following two theorems giving two partial answers to Problem \ref{problem}.

\begin{theorem}\label{main1}  If a pseudocompact Tychonoff space $X$ contains an infinite discrete $C^*$-embedded subspace $D$, then the function space $C_p(X)$ has an infinite-dimensional metrizable quotient. More pricesely, for any sequence $(F_n)_{n=1}^\infty$ of non-empty, finite and pairwise disjoint subsets of $D$ with $\lim_n |F_n|=\infty$ and the linear subspace $$Z=\bigcap_{n=1}^{\infty} \{f\in C_p(X): \textstyle{\sum\limits_{x\in F_n} f(x)=0}\}$$ the quotient space $C_p(X)/Z$ is isomorphic  to the subspace $\ell_\infty=\{(x_n)\in \IR^\mathbb{N}:\sup_n |x_n|<\infty\}$ of $\IR^\mathbb{N}$.
\end{theorem}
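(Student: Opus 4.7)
The plan is to exhibit a continuous linear surjection $T\colon C_p(X)\to\ell_\infty$ with kernel $Z$ that is moreover open, so that the induced bijection $\bar T\colon C_p(X)/Z\to\ell_\infty$ becomes a linear homeomorphism onto $\ell_\infty$ endowed with its subspace topology from $\mathbb R^{\mathbb N}$. The natural candidate is
$$T(f):=\Bigl(\tfrac{1}{|F_n|}\textstyle\sum_{x\in F_n} f(x)\Bigr)_{n\in\mathbb N}.$$
Linearity and the identification $\ker T=Z$ are immediate, and continuity follows because each coordinate is a finite linear combination of point-evaluation functionals on $C_p(X)$. Pseudocompactness of $X$ makes every $f\in C(X)$ bounded, so the image of $T$ sits inside $\ell_\infty$. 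Surjectivity onto $\ell_\infty$ uses the $C^*$-embedding hypothesis: given $y=(y_n)\in\ell_\infty$, the function $g\colon D\to\mathbb R$ defined by $g(x):=y_n$ for $x\in F_n$ and $g(x):=0$ elsewhere is bounded and (since $D$ is discrete) continuous, so by hypothesis it extends to some $f\in C(X)$ satisfying $T(f)=y$.

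The main obstacle is proving $T$ open. Given a basic zero neighbourhood $U=\{f:|f(a)|<\varepsilon\text{ for all }a\in A\}$ for finite $A\subseteq X$, I would find $\delta>0$ and a finite $N\subseteq\mathbb N$ such that every $y\in\ell_\infty$ with $|y_n|<\delta$ for $n\in N$ admits a preimage $f\in U$. Writing $A_D:=A\cap D$ and $A':=A\setminus D$, only finitely many indices $n$ satisfy $F_n\cap A_D\ne\emptyset$ because $A_D$ is finite and the $F_n$ are pairwise disjoint; let $N$ be this set. Taking $\delta:=\varepsilon$ ensures that the standard extension $f$ from the surjectivity step already satisfies $|f(a)|<\varepsilon$ for every $a\in A_D$ (since $f\equiv y_n$ on $F_n$ and $|y_n|<\delta=\varepsilon$ for those $n\in N$ that matter, while $f=0$ on $D\setminus\bigcup_n F_n$). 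The real task is then to arrange simultaneously that the extension is also $<\varepsilon$ at every point of $A'$.

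The control at $A'$ is the genuinely delicate step. Since $D$ is $C^*$-embedded and $X$ is pseudocompact, $\overline D$ is compact in $X$ and homeomorphic to $\beta D$. Each point $a\in A'\cap\overline D$ corresponds to a free ultrafilter $\mathcal U_a$ on $D$ at which any continuous extension is pinned to $\lim_{\mathcal U_a} f|_D$. To correct these values I would add to $f$ a function $z\in Z$ of the form $z=\sum_{a\in A'\cap\overline D} c_a z_a$, where each $z_a\in Z$ is chosen so that $\lim_{\mathcal U_a} z_a=1$ while $\lim_{\mathcal U_b} z_a=0$ for $b\ne a$; such $z_a$ are constructed by separating the finitely many distinct free ultrafilters $\mathcal U_b$ by pairwise disjoint sets and, in the case where $\mathcal U_a$ is concentrated on $\bigcup_n F_n$, by bisecting each $F_n=F_n^+\sqcup F_n^-$ and placing opposite constants on the two halves, so that the sums $\sum_{x\in F_n} z_a(x)$ all vanish while $\mathcal U_a$ sees one of the halves. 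Freeness of each $\mathcal U_a$, which forbids any single finite $F_n$ from belonging to it, is what makes the bisection argument go through. For $a\in A'\setminus\overline D$, Tychonoff regularity supplies a continuous cut-off $\phi\colon X\to[0,1]$ equal to $1$ on $\overline D$ and $0$ on $A'\setminus\overline D$, and replacing $f+z$ by $\phi(f+z)$ resets those values to $0$ without changing $(f+z)|_D$ or $T(f+z)=y$. The technical crux is thus the bisection/ultrafilter selection argument showing the $z_a$ can be chosen consistently for all finitely many ultrafilters simultaneously.
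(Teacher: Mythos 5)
Your overall plan (the operator $T(f)=(\mu_n(f))_{n}$, reduction to openness, and handling the finitely many ``bad'' points of $E$ lying in $\overline D\setminus D$) is sound and matches how the paper's proof begins, but the step you yourself call the technical crux --- producing $z_a\in Z$ with $\lim_{\mathcal U_a}z_a=1$ and $\lim_{\mathcal U_b}z_a=0$ for $b\ne a$ --- is exactly where the sketch breaks down, and as stated it does not work. A side remark first: the claim that $\overline D$ is compact and homeomorphic to $\beta D$ is false in general (the paper's own example $\Lambda=\beta\mathbb R\setminus(\beta\mathbb N\setminus\mathbb N)$ is pseudocompact and contains $\mathbb N$ as a \emph{closed} discrete $C^*$-embedded set, so there $\overline D=D$); fortunately you only need that a continuous extension's value at $a\in\overline D\setminus D$ equals $\lim_{\mathcal U_a}f|_D$ for some ultrafilter refining the neighbourhood trace, which holds anyway. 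The real problem is the bisection. If you place opposite constants on the two halves of \emph{every} $F_n$, then $z_a$ is nonzero on all of $\bigcup_nF_n$, so for another ultrafilter $\mathcal U_b$ concentrating on $\bigcup_nF_n$ you get $\lim_{\mathcal U_b}z_a\in\{c,-c\}$, not $0$; if instead you localize $z_a$ to a separating set $S_a\in\mathcal U_a$, the balancing values $-|F_n\cap S_a|/|F_n\setminus S_a|$ needed to keep $\sum_{x\in F_n}z_a(x)=0$ can be unbounded and can themselves be charged by $\mathcal U_b$. (Also, opposite constants on two halves cannot sum to zero when $|F_n|$ is odd, and adding $z$ may destroy the estimate at the points of $A\cap D$ unless $z$ is forced to vanish there --- minor, but symptomatic.)

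The decisive evidence that an idea is missing is that your argument for this step uses only freeness of the $\mathcal U_a$ and $|F_n|\ge 2$, whereas the theorem is false without $\lim_n|F_n|=\infty$. Take $X=\beta\mathbb N$, $F_n=\{a_n,b_n\}$ with all points distinct, and let $\mathcal U_1\ni\{a_n\}_n$ and $\mathcal U_2\ni\{b_n\}_n$ be free ultrafilters whose pushforwards under $a_n\mapsto n$ and $b_n\mapsto n$ are the same ultrafilter $\mathcal V$ on $\mathbb N$. Every $z\in Z$ satisfies $z(b_n)=-z(a_n)$, hence $\lim_{\mathcal U_2}z=-\lim_{\mathcal U_1}z$: the correction map $z\mapsto(\lim_{\mathcal U_1}z,\lim_{\mathcal U_2}z)$ has range a line and cannot prescribe arbitrary values at the two corresponding points of $\beta\mathbb N$; one checks directly that $T$ is not open for this $(F_n)$. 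So any completion of your crux must use $|F_n|\to\infty$ quantitatively, which the sketch never does. The paper sidesteps the whole issue: it partitions $D\setminus\bigcup_{k\le m}F_k$ into $|E|+1$ pieces, each meeting every $F_k$ with $k>m$ in at least $|F_k|/(|E|+1)-1\ge 1$ points (here $|F_n|\to\infty$ enters, via $\inf_{k>m}|F_k|\ge 2(|E|+1)$); $C^*$-embeddedness makes the closures of the pieces pairwise disjoint, so by pigeonhole some piece $P$ satisfies $\overline P\cap E=\emptyset$, and the preimage is supported entirely on $P$ with values rescaled by the bounded factor $|F_k|/|F_k\cap P|\le 2(|E|+1)$. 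This annihilates all values at $E\cap(\overline D\setminus\bigcup_{k\le m}F_k)$ at once, with no ultrafilters and no correction term. Either adopt that device, or isolate and fully prove the lemma that for finitely many distinct free ultrafilters the map $z\mapsto(\lim_{\mathcal U_i}z)_{i}$, restricted to bounded $z$ with $\sum_{x\in F_n}z(x)=0$ for all $n$, is onto $\mathbb R^k$ --- a statement that is true, but only because $|F_n|\to\infty$.
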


A subspace $A$ of a topological space $X$ is called {\em $C^*$-embedded} if each bounded continuous function $f:A\to\IR$ has a continuous extension $\bar f:X\to\IR$.

If a Tychonoff space $X$ is compact, then $X$ contains an infinite discrete $C^*$-embedded subspace if and only if $X$ contains a copy of $\beta\mathbb{N}$. On the other hand, the space $\omega_{1}$ is pseudocompact noncompact which does not contain $C^*$-embedded infinite discrete subspaces. Moreover, the space $\Lambda:=\beta\mathbb{R}\setminus(\beta\mathbb{N}\setminus\mathbb{N})$ discussed in \cite[6P, p.97]{gilman} is pseudocompact, noncompact and contains $\mathbb{N}$ as a closed discrete $C^{*}$-embedded set.


\begin{corollary}\label{c1} For any infinite discrete space $D$ the  space $C_p(\beta D)$ has a quotient space, isomorphic to the subspace $\ell_\infty$ of $\IR^\mathbb{N}.$
\end{corollary}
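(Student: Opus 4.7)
The plan is to deduce this corollary as a direct application of Theorem~\ref{main1} to the space $X=\beta D$. To do so, I would need to verify the two hypotheses of that theorem for $\beta D$: that it is pseudocompact, and that it contains an infinite discrete $C^*$-embedded subspace.

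The first hypothesis is immediate: $\beta D$ is compact, and every compact Tychonoff space is pseudocompact. For the second, the natural candidate is $D$ itself, viewed as a subspace of $\beta D$. Since $D$ is infinite and discrete in itself, and since $D$ embeds homeomorphically as a subspace of $\beta D$, the subspace $D\subset\beta D$ is infinite and discrete. That $D$ is $C^*$-embedded in $\beta D$ is precisely the defining universal property of the Stone--\v Cech compactification: every bounded continuous function $f\colon D\to\IR$ extends (uniquely) to a continuous function $\bar f\colon\beta D\to\IR$.

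With the hypotheses in place, I would then choose any sequence $(F_n)_{n=1}^\infty$ of non-empty, finite, pairwise disjoint subsets of $D$ with $\lim_n |F_n|=\infty$ — this is possible because $D$ is infinite, so one can, for instance, fix a countably infinite subset $\{d_{n,k}:n\in\IN,\ 1\le k\le n\}\subset D$ and set $F_n:=\{d_{n,1},\dots,d_{n,n}\}$. Defining $Z$ as in the statement of Theorem~\ref{main1}, that theorem yields that the quotient $C_p(\beta D)/Z$ is isomorphic to $\ell_\infty\subset\IR^\IN$, which is exactly the desired conclusion.

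There is no real obstacle here: the corollary is a straightforward specialization, and the only thing one might worry about is that Theorem~\ref{main1} requires a \emph{pseudocompact} ambient space (not merely any Tychonoff space), which is satisfied automatically because $\beta D$ is compact. Thus the proof reduces to citing Theorem~\ref{main1} after noting that $D$ is a $C^*$-embedded infinite discrete subspace of the compact (hence pseudocompact) space $\beta D$.
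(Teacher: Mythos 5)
Your proposal is correct and is exactly the argument the paper intends: the corollary is stated as an immediate specialization of Theorem~\ref{main1}, with $\beta D$ compact (hence pseudocompact) and $D$ an infinite discrete $C^*$-embedded subspace by the universal property of the Stone--\v Cech compactification. The explicit construction of the sets $F_n$ is a nice touch but not needed beyond noting that $D$ is infinite.
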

Theorem  \ref{main1} and Proposition \ref{first} yield immediately
\begin{corollary}
For any Tychonoff space $X$ containing a $C^*$-embedded infinite discrete subspace, the function space $C_p(X)$ has an infinite-dimensional metrizable quotient, isomorphic to $\IR^{\IN}$ or $\ell_\infty$.
\end{corollary}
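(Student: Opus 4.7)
The plan is to split into the two cases offered by Proposition \ref{first}, according to whether the given Tychonoff space $X$ is pseudocompact or not; in either case the result follows almost immediately from one of the two ingredients in the hypothesis.

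First I would handle the non-pseudocompact case. If $X$ is not pseudocompact, then the equivalence $(1)\Leftrightarrow(3)$ in Proposition \ref{first} gives a quotient of $C_p(X)$ isomorphic to $\IR^{\IN}$. Since $\IR^{\IN}$ is metrizable (as a countable product of metrizable spaces) and infinite-dimensional, this already furnishes a quotient of the required form. Note that this case does not use the hypothesis about the $C^*$-embedded discrete subspace at all.

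Next I would handle the pseudocompact case. If $X$ is pseudocompact and still contains an infinite discrete $C^*$-embedded subspace $D$, then $X$ satisfies exactly the hypotheses of Theorem \ref{main1}. Applying that theorem (with any choice of pairwise disjoint finite subsets $F_n\subset D$ with $|F_n|\to\infty$, e.g.\ consecutive blocks of an enumeration of $D$) produces a quotient of $C_p(X)$ isomorphic to $\ell_\infty\subset\IR^{\IN}$. Again $\ell_\infty$, viewed as a subspace of $\IR^{\IN}$ with the pointwise topology, is metrizable and infinite-dimensional.

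There is no real obstacle here; the corollary is a case analysis glueing Proposition \ref{first} and Theorem \ref{main1} together. The only small point worth mentioning explicitly is that both $\IR^{\IN}$ and the pointwise-topology subspace $\ell_\infty$ of $\IR^{\IN}$ are indeed metrizable and infinite-dimensional, so the dichotomy $\IR^{\IN}$ \emph{or} $\ell_\infty$ in the conclusion is literally covered by the two cases.
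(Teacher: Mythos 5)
Your proof is correct and matches the paper exactly: the authors also obtain the corollary immediately by splitting into the non-pseudocompact case (Proposition \ref{first}, giving an $\IR^{\IN}$ quotient) and the pseudocompact case (Theorem \ref{main1}, giving an $\ell_\infty$ quotient). Nothing further is needed.
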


Besides the subspace $\ell_\infty$ of $\IR^{\IN}$, the following corollary of Theorem~\ref{main1} involves also the subspace $c_0:=\{(x_n)_{n\in\IN}\in\IR^\IN:\lim_{n\to\infty}x_n=0\}$ of $\IR^\IN$.

\begin{corollary}\label{corollary 2} If a compact Hausdorff space $X$ is not Efimov, then its function space $C_{p}(X)$ has a quotient space, isomorphic to the subspaces $\ell_\infty$ or $c_0$ in $\IR^\mathbb{N}$.
\end{corollary}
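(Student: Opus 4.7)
The plan is to split on the two possible obstructions to $X$ being Efimov. Assuming $X$ is infinite (the finite case producing no infinite-dimensional quotient and being tacitly excluded from the statement), the failure of the Efimov condition forces $X$ to contain either a copy of $\beta\IN$ or a non-trivial convergent sequence. The first alternative will yield the $\ell_\infty$ quotient via Theorem~\ref{main1}, while the second will yield the $c_0$ quotient by an essentially direct calculation inside $C_p$ of a convergent sequence.

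Assume first that $X$ contains a copy of $\beta\IN$. As $X$ is compact Hausdorff, hence normal, and $\beta\IN$ is closed in $X$, Tietze's extension theorem shows that $\beta\IN$ is $C^*$-embedded in $X$. Since $\IN$ is $C^*$-embedded in $\beta\IN$ by the defining property of the Stone--\v{C}ech compactification, $\IN$ is also $C^*$-embedded in $X$. Because $X$ is compact, hence pseudocompact, and contains the infinite discrete $C^*$-embedded subspace $\IN$, Theorem~\ref{main1} produces a quotient of $C_p(X)$ topologically isomorphic to the subspace $\ell_\infty$ of $\IR^\IN$.

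Assume instead that $X$ contains a non-trivial convergent sequence. Passing to a subsequence we find pairwise distinct terms $(x_n)_{n\in\IN}$ converging to some $x_\infty\notin\{x_n:n\in\IN\}$, and put $Y=\{x_n:n\in\IN\}\cup\{x_\infty\}$, a compact subspace of $X$ homeomorphic to the one-point compactification of $\IN$. Since $Y$ is a compact subspace of the Tychonoff space $X$, the restriction map $r:C_p(X)\to C_p(Y)$, $f\mapsto f|_Y$, is a continuous open surjection, as recalled in the introduction; it therefore suffices to find a quotient of $C_p(Y)$ topologically isomorphic to $c_0\subset\IR^\IN$. Writing $\mathbf{1}$ for the constant function $1$ on $Y$ and $V=\{f\in C_p(Y):f(x_\infty)=0\}$, the continuous projection $f\mapsto f-f(x_\infty)\mathbf{1}$ from $C_p(Y)$ onto $V$ has kernel $\IR\cdot\mathbf{1}$, so $C_p(Y)/(\IR\cdot\mathbf{1})$ is topologically isomorphic to $V$; and the evaluation $V\to c_0$, $f\mapsto(f(x_n))_{n\in\IN}$, is a linear bijection that is a homeomorphism because both topologies are just pointwise convergence. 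Composing $r$ with this chain yields the required $c_0$-quotient of $C_p(X)$. No serious obstacle is expected; the only care needed is the $C^*$-embedding chain $\IN\subset\beta\IN\subset X$ in the first case and the identification $V\cong c_0\subset\IR^\IN$ in the second.
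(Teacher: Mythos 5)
Your proof is correct and follows essentially the same route as the paper: the same dichotomy (a copy of $\beta\mathbb{N}$ versus a non-trivial convergent sequence), with Theorem~\ref{main1} handling the first case after noting that $\IN$ is $C^*$-embedded in $X$. The only divergence is in the second case, where the paper simply cites Arhangel'skii for the facts that $C_p(K)$ is isomorphic to $c_0$ and complemented in $C_p(X)$, whereas you obtain the $c_0$-quotient by hand via the open restriction map onto $C_p(Y)$ followed by the quotient of $C_p(Y)$ by the constants; both arguments are sound.
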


\begin{proof} The space $X$, being non-Efimov, contains either an infinite converging sequence or a copy of $\beta\mathbb{N}$.  In the latter case $X$ contains an infinite discrete $C^{*}$-embedded subspace and Theorem \ref{main1} implies that $C_p(X)$ has a quotient space, isomorphic to $\ell_\infty\subset\IR^\IN$. If $X$ contains a sequence    $(x_n)_{n\in\IN}$ of pairwise distinct points that converges to a point $x\in X$, then for the compact subset $K:=\{x\}\cup\{x_n\}_{n\in\IN}$ of $X$ the function space $C_{p}(K)$ is isomorphic to $c_0\subset\IR^\IN$ and is complemented in $C_{p}(X)$, see \cite[Theorem 1, p.130, Proposition 2, p.128]{arkh-encyklo}.
\end{proof}

We prove also the following theorem which will be applied for Example \ref{exa} below.
\begin{theorem}\label{main2} For a Tychonoff space $X$ the space $C_p(X)$ has a metrizable infinite-dimensional quotient if there exists a sequence $(K_n)_{n\in\w}$ of non-empty compact subsets of $X$ such that for every $n\in\w$ the compact set $K_n$ contains two disjoint topological copies of $K_{n+1}$.
\end{theorem}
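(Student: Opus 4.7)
The plan is to use the Cantor-like family of compacta supplied by the hypothesis to construct a continuous linear map $\Lambda\colon C_p(X)\to\IR^{\IN}$ whose kernel $Z$ yields an infinite-dimensional metrizable quotient $C_p(X)/Z$.

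First, I would apply the hypothesis recursively to build a dyadic tree $\{K_s\}_{s\in 2^{<\w}}$ of compact subsets of $X$ with $K_\emptyset=K_0$, each $K_s$ a homeomorphic copy of $K_{|s|}$, and the two children $K_{s0}, K_{s1}$ (indexed by extending $s$ with a $0$ or $1$) forming disjoint subsets of $K_s$. Setting $L_n:=K_{0^{n-1}1}$, the sets $L_n$ are pairwise disjoint compacta in $K_\emptyset$, each a copy of $K_n$. Applying the hypothesis once more inside each $L_n$, pick points $y_n, z_n\in L_n$ lying in two disjoint subcopies of $K_{n+1}$. Define continuous linear functionals $\lambda_n(f):=f(y_n)-f(z_n)$, let $Z:=\bigcap_{n\in\IN}\ker\lambda_n$, and form the induced continuous linear injection $\bar\Lambda\colon C_p(X)/Z\hookrightarrow\IR^{\IN}$, $[f]\mapsto(\lambda_n(f))_n$. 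The $\lambda_n$ are linearly independent because their supports $\{y_n,z_n\}\subset L_n$ are pairwise disjoint, so $C_p(X)/Z$ is infinite-dimensional; its image sits inside the metrizable space $\IR^{\IN}$.

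The main obstacle is to verify that $\bar\Lambda$ is a topological embedding, equivalently that the quotient topology on $C_p(X)/Z$ coincides with the subspace topology from $\IR^{\IN}$. Given a basic $0$-neighborhood $V(I;\e):=\{f\in C_p(X):|f(x)|<\e,\,x\in I\}$ with finite $I\subset X$, let $F:=\{n\in\IN:L_n\cap I\neq\emptyset\}$; this is finite by the pairwise disjointness of the $L_n$. For a sufficiently small $\delta>0$ and any $c\in\bar\Lambda(C_p(X)/Z)$ with $|c_n|<\delta$ for $n\in F$, one takes any $g\in C(X)$ with $\Lambda(g)=c$ and seeks $h\in Z$ with $|g(x_i)+h(x_i)|<\e$ on $I$: on the finite conflict set $I\cup\bigcup_{n\in F}\{y_n,z_n\}$ one solves the corresponding compatibility problem on a conflict graph (each connected component must receive a common value of $h$ in the intersection of intervals $(-g(x)-\e,-g(x)+\e)$; feasibility follows from $\delta\le\e/|I|$ since the spread of $g$-values along a component is at most $|F|\delta$). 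Extending $h$ to all of $X$ so that $h$ also vanishes on $L_n$ for every $n\notin F$---which secures $h(y_n)=h(z_n)=0$ and hence $h\in Z$---is done by multiplying the finite-set prescription by a Tychonoff cutoff $\chi\in C(X,[0,1])$ equal to $1$ on the finite conflict set and to $0$ on the compacta $L_n$, $n\notin F$. The existence of this cutoff, amounting to separating a finite set from the pairwise disjoint compacta $\{L_n:n\notin F\}$ inside the Tychonoff space $X$, is the delicate technical point, and where the nested tree structure must be leveraged to guarantee functional separability despite the fact that $\bigcup_n L_n$ need not be closed in $X$.
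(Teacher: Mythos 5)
Your tree construction and the idea of taking $Z$ to be the common kernel of a sequence of finitely supported functionals follow the paper's general strategy, but the functionals you choose are too small, and the step you yourself flag as ``the delicate technical point'' is not merely delicate --- it fails. Take $X=\beta\IN$ and $K_n=\beta\IN$ for all $n$ (the closures of the even and odd integers are disjoint clopen copies of $\beta\IN$, so the hypothesis of the theorem holds). In your construction the set $\{y_n,z_n:n\in\IN\}$ is countable and relatively discrete (the two subcopies inside each clopen set $L_n$ are separated by a clopen set), hence $C^*$-embedded in the $F$-space $\beta\IN$; consequently $\overline{\{y_n:n\in\IN\}}$ and $\overline{\{z_n:n\in\IN\}}$ are disjoint copies of $\beta\IN$, and for every free ultrafilter $\U$ on $\IN$ the points $c_\U=\lim_\U y_n$ and $c_\U'=\lim_\U z_n$ are distinct, all these points being pairwise distinct as $\U$ varies. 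Every $h\in Z$ satisfies $h(c_\U)=\lim_\U h(y_n)=\lim_\U h(z_n)=h(c_\U')$, so $\nu_\U:=\delta_{c_\U}-\delta_{c_\U'}$ induces a continuous linear functional on $C_p(X)/Z$ for each of the $2^{\mathfrak c}$ ultrafilters $\U$. If the quotient had a countable neighborhood base $\big(q(V(I_k,\e_k))\big)_{k\in\IN}$ with $I_k\subset X$ finite, each $\nu_\U$ would be bounded on some $V(I_k,\e_k)$ and hence would be a linear combination of $\{\delta_x:x\in I_k\}$, forcing $\{c_\U,c_\U'\}\subset I_k$; by pigeonhole some finite $I_k$ would have to contain uncountably many distinct points. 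So $C_p(\beta\IN)/Z$ is \emph{not} metrizable for your $Z$, no matter how the points $y_n,z_n$ are chosen, and in particular your cutoff $\chi$ cannot exist when $I=\{c_\U,c_\U'\}$, since $c_\U$ lies in the closure of $\bigcup_{n\notin F}L_n$ for every finite $F$.

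The paper circumvents exactly this obstruction by using functionals whose supports grow with $n$: it picks in each node $K_s$ a point $x_s$ lying outside the two children and sets $\mu_n=2^{-n}\sum_{s\in 2^n}\delta_{x_s}$. Because $\mu_n$ charges all $2^n$ level-$n$ branches equally, for a given finite set $E$ (meeting at most $|E|\le 2^{n-1}$ branches) one can reflect the values of $f$ from the branches meeting $E$ onto disjoint mirror branches with opposite sign, via the homeomorphisms $e_s\circ e_{\xi(s)}^{-1}$, obtaining $\zeta\in Z$ with $f-\zeta$ small on $E$; continuity of $\zeta$ is secured because $\{x_s:s\in 2^{<\w}\}\cup\bigcap_n K_n'$ is compact and the level-$n$ branches are relatively clopen in it. No analogous reflection is available for functionals with two-point supports, so your approach cannot be repaired without replacing the $\lambda_n$ by measures of this averaging type.
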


We do not know if in Theorem \ref{main2} the obtained quotient is isomorphic to $\ell_{\infty}$ or $c_0$.
 Example \ref{exa} below provides an Efimov space $K$ (under $\lozenge$) for which Theorem \ref{main2} applies.
  \begin{example}\label{exa} Under $\lozenge$ there exists a Efimov space $K$ whose function space $C_{p}(K)$ has a metrizable infinite-dimensional quotient.
  \end{example}

 \begin{proof}
   De la Vega \cite[Theorem 3.22]{vega1} (we refer also to \cite{vega})  constructed under $\lozenge$ a compact zero-dimensional hereditary separable  space $K$ (hence not containing a copy of $\beta\mathbb{N}$) such that:
  \begin{enumerate}
  \item [(i)] $K$ does not contain non-trivial convergent sequences.
   \item [(ii)]  $K$  has a  base of clopen pairwise hemeomorphic sets.
    \end{enumerate}
    It is easy to see that $K$  admits  a sequence $(K_{n})$ of infinite compact subsets such that each $K_{n}$ contains two disjoint subsets homeomorphic to $K_{n+1}$; therefore by Theorem \ref{main2} the space $C_{p}(K)$ has the desired property.
  \end{proof}

  As the space $K$ from Example~\ref{exa} does  not contain $\beta\mathbb{N}$, the assumption of Theorem \ref{main1} is not satisfied. Note that in \cite[Example 17]{kakol-sliwa} we provided  (again under  $\lozenge$) an example  of an Efimov space $K$ for which Theorem \ref{main2} cannot be applied.

\section{ Proof of Theorem \ref{main1}}

Let $D$ be an infinite discrete $C^*$-embedded subspace of a pseudocompact Tychonoff space $X$. Choose any sequence $(F_n)_{n\in\IN}$ of non-empty, finite and pairwise disjoint subsets of $D$ with $\lim_{n\to\infty} |F_n|= \infty$.

For every $n\in\mathbb{N}$ consider the probability measure $\mu_n=\frac1{|F_n|}\sum_{x\in F_n}\delta_x$, where $\delta_x$ is the Dirac measure concentrated at $x$.

The pseudocompactness of the space $X$ guarantees that the linear continuous operator $$T:C_p(X)\to\ell_\infty\subset\IR^{\IN},\;\;T:f\mapsto(\mu_n(f))_{n\in\IN}$$is well-defined.

We claim that the operator $T$ is open. Given a neighborhood $U\subset C_p(X)$ of zero, we need to check that $T(U)$ is a neighborhood of zero in $\ell_\infty$.
We can assume that $U$ is of the basic form $$U:=\{f\in C_p(X):\max_{x\in E}|f(x)|<\e\}$$ for some finite set $E\subset X$ and some $\e>0$.

Choose a number $m\in\IN$ such that $\inf_{k>m} |F_k|\ge 2(|E|+1)$.
We claim that $T(U)$ contains the open neighborhood $$V:=\{(y_k)_{k=1}^\infty\in\ell_\infty:\max_{k\le m}|x_k|<\e\}$$ of zero in $\ell_\infty\subset\IR^{\IN}$.

Fix any sequence $(y_k)_{k=1}^\infty\in V$. Choose any partition $\mathcal P$ of the set $D\setminus\bigcup_{k=1}^mF_k$ into $|\mathcal P|=|E|+1$ pairwise disjoint sets such that for every $P\in\mathcal P$ and $k>m$ the intersection $P\cap F_k$ has cardinality $$|P\cap F_k|\ge \frac{|F_k|}{|E|+1}-1\ge 1.$$ Since the discrete subspace $D$ is $C^*$-embedded in $X$, the sets in the partition $\mathcal P$ have pairwise disjoint closures in $X$. Taking into account that $|\mathcal P|>|E|$, we can find a set $P\in\mathcal P$ whose closure $\bar P$ is disjoint with $E$.

Consider the function $f:S\to\IR$ defined by
$$f(x)=\begin{cases}y_k&\mbox{if $x\in F_k$ for some $k\le m$};\\
y_k{\cdot}\frac{|F_k|}{|F_k\cap P|}&\mbox{if $x\in P\cap F_k$ for some $k>m$};\\
0&\mbox{otherwise}.
\end{cases}
$$
The function $f$ is bounded because $\sup_{k\in\IN}|y_k|<\infty$ and
$$\sup_{k>m} \frac{|F_k|}{|F_k\cap P|}\le\sup_{k>m}\frac{|F_k|}{\frac{|F_k|}{|E|+1}-1}=\sup_{k>m}\frac1{\frac1{|E|+1}-\frac1{|F_k|}}
\le \frac1{\frac1{|E|+1}-\frac1{2(|E|+1)}}=2(|E|+1)<\infty.$$
As the space $D$ is discrete and $C^*$-embedded into $X$, the bounded function $f$ has a continuous extension $\bar f:X\to\IR$. Since the space $X$ is Tychonoff, there exists a continuous function $\lambda:X\to[0,1]$ such that $\lambda(\bar D)=\{1\}$ and $\lambda(x)=0$ for all $x\in E\setminus \bar D$. Replacing $\bar f$ by the product $\bar f\cdot\lambda$, we can assume that $\bar f(x)=0$ for all $x\in E\cap\bar D$. We claim that $\bar f\in U$.

Given any $x\in E$ we should prove that $|\bar f(x)|<\e$. This is clear if $x\notin\bar D$. If $x\in F_k$ for some $k\le m$, then $|\bar f(x)|=|y_k|<\e$ as $y\in V$. If $x\in\bar D\setminus \bigcup_{k=1}^mF_k$, then $x\in \bar Q$ for some set $Q\in\mathcal P\setminus\{P\}$. The definition of the function $f$ ensures that $f|Q\equiv 0$ and then $|\bar f(x)|=0<\e$. This completes the proof of the inclusion $\bar f\in U$.

The definition of the function $f$ ensures that $\mu_k(\bar f)=\mu_k(f)=y_k$ for all $k\in\IN$. So, $(y_k)_{k=1}^\infty=T(f)\in T(U)$ and $V\subset T(U)$. This completes the proof of the openness of the operator $T:C_p(X)\to\ell_\infty\subset\IR^{\IN}$.  Since the kernel of the open operator $T$ equals to $$Z=\bigcap_{n=1}^\infty\{f\in C_p(X):\mu_n(f)=0\},$$
the quotient space $C_p(X)/Z$ is isomorphic to the subspace $T(X)=\ell_\infty$ of $\IR^{\IN}$.\hfill\qed

\section{Proof of Theorem \ref{main2}}

Let $X$ be a Tychonoff space and $(K_n)_{n\in\w}$ be a sequence of compact subsets of $X$ such that for every $n\in\w$ there are two embeddings $$\ddot e_{n,0},\ddot e_{n,1}:K_{n+1}\to K_n$$ such that $\ddot e_{n,0}(K_{n+1})\cap \ddot e_{n,1}(K_{n+1})=\emptyset$. Replacing the sequence $(K_n)_{n\in\w}$ by the sequence $(K_{2n})_{n\in\w}$, if necessary, we can assume that for any $n\in \w$ the set $K_n\setminus (\ddot e_{n,0}(K_{n+1})\cup \ddot e_{n,1}(K_{n+1}))$ is not empty and hence contains a point $\dot x_n$.

 Let $2^{<n}:=\bigcup_{k<n}2^k$ and $2^{\leq n}:=\bigcup_{k\leq n}2^k$ for $n\in \w$, where $2^k$ is the family of all binary sequences of length $k,$ that is $$2^0=\{\emptyset\},\,\, 2^1=\{(0), (1)\},\,\,\, 2^2=\{(0,0), (0,1), (1,0), (1,1)\}, \mbox{ \ \ and so on.} $$ For a binary sequence $s=(s_0,\dots,s_{n-1})\in 2^n$ by $|s|$ we denote the length $n$ of the sequence $s$; for $s=\emptyset \in 2^0$ we put $|s|=0.$

Let $s\hat{\;}i=(i)$ if $s=\emptyset \in 2^0, \,\, i\in \{0,1\},$ and $s\hat{\;}i=(s_0, \ldots, s_{n-1}, i)$\,\, if $s= (s_0, \ldots, s_{n-1}) \in 2^n,\,\, 1\leq n < \w, \,\, i\in \{0,1\}.$ Similarly we define $s\hat{\;}p\in 2^{|s|+|p|}$ for all $s,p \in 2^{< \w}.$

Consider the family of embeddings $$\big(e_s:K_{|s|}\to X\big)_{s\in 2^{<\w}}$$ defined by the recursive formula: $e_\emptyset:K_0\to X$ is the identity embedding of $K_0$ into $X$ and
$$e_{s\hat{\;}i}=e_s\circ \ddot e_{|s|,i}\mbox{ for $s\in 2^{<\w}$ and $i\in\{0,1\}$}.$$

For every $s\in 2^{<\w}$, let $K_s:=e_s(K_{|s|})$ and $x_s:=e_s(\dot x_{|s|})\in K_s$.

If $n\in \w, i\in\{0,1\}, s\in 2^n$ and $t=s\hat{\;}i \in 2^{n+1},$ then $K_t \subset K_s;$ indeed, $$K_t= K_{s\hat{\;}i}=e_{s\hat{\;}i} (K_{n+1})=e_s(\ddot e_{n,i}(K_{n+1}))\subset e_s(K_n)=K_s.$$

If $n\in \w$ and $s, t\in 2^n$ with $s\neq t$, then $K_s \cap K_t =\emptyset$.  Indeed, for $n=0$ it is obvious. Assume that it is true for some $n\in \w.$ Let $s, t\in 2^n$ and $ i, j \in \{0,1\}$ with $s\hat{\;}i\neq t\hat{\;}j.$ If $s\neq t$, then $$K_{s\hat{\;}i}\cap K_{t\hat{\;}j} \subset K_s \cap K_t =\emptyset.$$ If $s=t$, then $i\neq j$, so $K_{s\hat{\;}i}\cap K_{t\hat{\;}j}= K_{s\hat{\;}i}\cap K_{s\hat{\;}j} =e_s(\ddot e_{n,i}(K_{n+1}))\cap e_s(\ddot e_{n,j}(K_{n+1}))=\emptyset$ since $e_s$ is injective and $\ddot e_{n,i}(K_{n+1})\cap \ddot e_{n,j}(K_{n+1})=\emptyset.$ Thus for all $u,v \in 2^{n+1}$ with $u\neq v$ we have $K_u\cap K_v=\emptyset .$

If $n\in \w, i\in\{0,1\}$ and $t\in 2^n$, then $$x_t= e_t(\dot x_n) \in e_t (K_n \setminus \ddot e_{n,i}(K_{n+1}))= e_t(K_n) \setminus e_t(\ddot e_{n,i}(K_{n+1}))= K_t \setminus e_{t\hat{\;}i}(K_{n+1})= K_t \setminus K_{t\hat{\;}i}.$$ It follows that $x_t \not \in K_s$ if $t,s \in 2^{< \w}$ with $|t| < |s|.$

If $s,t,p \in 2^{< \w}$ with $|s|=|t|, $ then $e_s^{-1} \circ e_{s\hat{\;}p}=e_t^{-1} \circ e_{t\hat{\;}p}$ and $$e_s^{-1} (x_{s\hat{\;}p})= e_s^{-1} (e_{s\hat{\;}p}(\dot x_{|s|+|p|}))=e_t^{-1} (e_{t\hat{\;}p}(\dot x_{|t|+|p|}))= e_t^{-1} (x_{t\hat{\;}p}).$$

For every $n\in\w$ let $$\mu_n=2^{-n}\sum_{s\in 2^n}\delta_{x_s}$$ be the uniformly distributed probability measure with the finite support $\{x_s:s\in 2^n\}$. Let $$Z=\bigcap_{n\in\w}\{f\in C_p(X):\mu_n(f)=0\}.$$ The set $\{\mu_n: n\in \w\}$ is linearly independent in the dual of the space $C_p(X),$ since $x_s\neq x_t$ for $s,t\in 2^{<\w}$ with $s\neq t.$ Thus the quotient space $C_p(X)/Z$ is infinite-dimensional.
\smallskip

We  prove that this quotient space is metrizable.

Let $X_0= \{x_s: s\in 2^{<\w}\}$ and $K'_n=\bigcup_{s\in 2^n} K_s$ for $n\in \w.$ Clearly, $(K'_n)$ is a decreasing sequence of compact subsets of $K_0$ and $X_0 \setminus K'_n \subset \{x_s: s\in 2^{<n}\}$ for any $n\in \w$. The subset
$$K=X_0 \cup\bigcap_{n\in\w}K'_n$$
of $X$ is compact. Indeed, $\overline{ X_0}= \overline{ (X_0\setminus K'_n)} \cup \overline{ (X_0\cap K'_n)} \subset X_0 \cup K'_n$ for any $n\in \w,$ so $\overline{ X_0} \subset X_0 \cup \bigcap_{n\in \w} K'_n$. Thus $$\overline{ K} = \overline{ X_0} \cup \bigcap_{n\in \w} K'_n= X_0 \cup \bigcap_{n\in \w} K'_n = K\subset K_0,$$ so $K$ is compact.


Consider the quotient map $q:C_p(X)\to C_p(X)/Z$. For every $n\in\w$ consider the open neighborhood $$V_n:=\{f\in C_p(X):|f(x_s)|<2^{-n}\; \mbox{for each}\; s\in 2^{<n} \}$$ of zero in $C_p(X)$. We claim that $\big(q(V_n)\big)_{n\in\w}$ is a neighborhood basis at zero in the quotient space $C_p(X)/Z$.

Given any neighborhood $U\subset C_p(X)$ of zero, we need to find $n\in\w$ such that $V_n+Z\subset U+Z$. Without loss of generality we can assume that $U$ is of basic form $$U=\{f\in C_p(X):\max_{x\in E}|f(x)|<\e\}$$ for some $\e>0$ and some finite set $E\subset X$. Choose $n\in\w$ so large that
\begin{enumerate}
\item $2^{-n}<\e$;
\item $2^{1-n}|E|<1$;
\item $E\cap X_0 \subset \{x_s\}_{s\in 2^{<n}}$;
\end{enumerate}

We claim that $V_n\subset U+Z$. Given any function $f\in V_n$, we should write it as $f=u+\zeta$ where $u\in U$ and $\zeta\in Z$.

Consider the set $S=\{s\in 2^n:E\cap K_s\ne\emptyset\}$. The condition (2) ensures that $|S|\leq |E|\leq 2^{n-1}$. So, we can find an injective map $\xi:S\to 2^n\setminus S$.  Now define the function $\zeta_0: K\to \mathbb R$ by the formula
$$\zeta_0(x)=\begin{cases}
f(x)&\mbox{$x\in K_s\cap K$ for some $s\in S$};\\
-f\circ e_s\circ e^{-1}_{\xi(s)}(x)&\mbox{if $x\in K_{\xi(s)}\cap K$ for some $s\in S$};\\
0&\mbox{otherwise}.
\end{cases}
$$

The function  $\zeta_0$ is continuous. Indeed, let $B_s=\{x_s\}$ for $s\in 2^{<n}$ and $B_s=K_s\cap K$ for $s\in 2^n$. Then $K=\bigcup_{s\in 2^{\leq n}} B_s$ and the sets $B_s, s\in 2^{\leq n},$ are compact and pairwise disjoint, so they are open and closed subsets of $K$. Since $\zeta_0|B_s$ is continuous for any $s\in 2^{\leq n}$, then $\zeta_0$ is continuous. As $K$ is compact and $E$ is finite,  there exists $\zeta \in C_p(X)$ with $\zeta|K=\zeta_0$ and $\zeta (x) = f(x)$ for all $x\in E\setminus K.$

We  prove that $\zeta \in Z.$

Let $m\in \w.$ If $m<n$ and $t\in 2^m,$ then $\zeta (x_t)=0,$ so $\mu_m (\zeta)=0$ for $m<n.$

If $m\geq n, $ then $$
\begin{aligned}
 \sum_{t\in 2^m} \zeta(x_t)&= \sum_{p\in 2^{m-n}} \sum_{s\in 2^n} \zeta(x_{s\hat{\;}p})=\sum_{p\in 2^{m-n}} \Big(\sum_{s\in S}\zeta(x_{s\hat{\;}p}) + \sum_{s\in S}\zeta(x_{\xi(s)\hat{\;}p})\Big)=\\
&=\sum_{p\in 2^{m-n}} \Big(\sum_{s\in S}f(x_{s\hat{\;}p}) - \sum_{s\in S}f\big(e_s(e_{\xi(s)}^{-1}(x_{\xi(s)\hat{\;}p}))\big)\Big)=\\
&=\sum_{p\in 2^{m-n}} \Big(\sum_{s\in S}f(x_{s\hat{\;}p}) - \sum_{s\in S}f\big(e_s(e_s^{-1}(x_{s\hat{\;}p}))\big)\Big)=\\
&=\sum_{p\in 2^{m-n}}\Big(\sum_{s\in S}f(x_{s\hat{\;}p})- \sum_{s\in S}f(x_{s\hat{\;}p})\Big)=0,
\end{aligned}
$$ so $\mu_m (\zeta)=0$ for $m\geq n.$
Thus $\zeta \in Z.$

Finally we  prove that $f-\zeta \in U.$ For $x\in E\setminus K$ we have $|f(x)-\zeta (x)|=0<\e.$ Let $x\in E\cap K.$ Then $x=x_t$ for some $t\in 2^{<n}$ or $x\in K_s$ for some $s\in S.$ In the first case we have $$|f(x)-\zeta(x)|=|f(x)-0|<2^{-n}<\e;$$ in the second case we get $$|f(x)-\zeta(x)|=|f(x)-f(x)|=0<\e.$$ Thus $|f(x)-\zeta(x)|<\e$ for any $x\in E,$ so $f-\zeta \in U.$

We have shown that the quotient space $C_p(X)/Z$ is infinite-dimensional and metrizable.


\begin{thebibliography}{}

\bibitem{arkh-encyklo} A. V. Arhangel'skii, \textit{General Topology III}, Encyclopedia of Mathematical Sciences, \textbf{51}, Springer, 1995.

\bibitem{Arhangel}
A. V. Arhangel'skii, \emph{Topological function spaces}, Math. Appl. \textbf{78}, Kluwer Academic Publishers, Dordrecht, 1992.






\bibitem{vega} De la R. Vega, \textit{ Basic homogeneity properties in the class of zero-dimensional  compact spaces}, Topology Appl. \textbf{155} (2008), 225--232.

\bibitem{vega1} De la R. Vega, \textit{Homogeneity properties on compact spaces}, Disserationes (2005) University of Wisconsin-Madison.






\bibitem{dow} A. Dow,  \textit{Efimov spaces and the splitting number}, Topology Proc. \textbf{29} (2005), 105-–113.

\bibitem{dow1} A. Dow, \textit{Compact sets without converging sequences in the random real model}, Acta Math. Univ. Comenianae \textbf{76} (2007), 161–-171

\bibitem{dow2} A. Dow, S.  Shelah,  \textit{An Efimov space from Martin’s Axiom}, Houston J. Math. \textbf{39} (2013), 1423-–1435.

\bibitem{efimov} B. Efimov,  \textit{ Subspaces of dyadic bicompacta}, Doklady Akademiia Nauk USSR \textbf{185} (1969), 987–990 (Russian); English transl., Soviet Mathematics. Doklady \textbf{10} (1969), 453–456.




\bibitem{engel} R. Engelking, \textit{General topology}, Berlin 1989.


\bibitem{fedorczuk1}  V.V.  Fedorcuk,  \textit{A bicompactum whose infinite closed subsets are all n-dimensional}, Mathematics of the USSR. Sbornik \textbf{25} (1976), 37--57.

\bibitem{fedorczuk2}  V.V. Fedorcuk, \textit{ Completely closed mappings, and the consistency of certain general topology theorems with the axioms of set theory},  Mathematics of the USSR. Sbornik \textbf{28} (1976), 3–-33.



\bibitem{geschke} S. Geschke,  \textit{The coinitialities of Efimov spaces},  Set Theory and its Applications, Babinkostova et al., Editors, Contemporary Mathematics \textbf{533} (2011), 259--265.

\bibitem{gilman} L. Gillman, M. Jerison, \textit{Rings of Continuous Functions}, Graduate Texts in Mathematics, Springer 1976.




\bibitem{hart} P. Hart,  \textit{Efimov's problem}, Problems in Topology II Edited by E. Pearl, Pages (2007),  171--177.






\bibitem{JR} W. B. Johnson, H. P. Rosenthal, \textit{On weak*-basic
sequences and their applications to the study of Banach spaces}, Studia
Math. \textbf{43} (1975), 166--168.


\bibitem{sa1} J. K\c{a}kol, S. A.  Saxon and A. Tood, \textit{Barrelled
spaces with(out) separable quotients}, Bull. Aust. Math. Soc. \textbf{90}
(2014), 295---303.

\bibitem{sa2} J. K\c{a}kol,  S. A. Saxon, \textit{Separable quotients in
}$C_{c}(X)$\textit{, }$C_{p}(X)$\textit{\ and their duals},
Proc. Amer. Math. Soc. \textbf{145} (2017), 3829--3841.


\bibitem{kakol-sliwa}  J. K\c akol, W. ·\'Sliwa, \textit{Efimov spaces and the separable quotient problem for spaces $C_p (K)$}, J. Math. Anal. App., \textbf{457},  (2018), 104-113.



\bibitem{La}  E. Lacey, \textit{Separable quotients of Banach spaces},
An. Acad. Brasil. Ci\`{e}nc. \textbf{44} (1972), 185--189.







\bibitem{bonet}
P. P\'{e}rez Carreras, J. Bonet, \emph{Barrelled Locally Convex Spaces}, North-Holland Mathematics Studies \textbf{131}, North-Holland, Amsterdam, 1987.


\bibitem{Ro} H. P. Rosenthal, \textit{On quasi-complemented subspaces of
Banach spaces, with an appendix on compactness of operators from }$%
L_{p}\left( \mu \right) $\textit{\ to }$L_{r}\left( \nu \right) $, J. Funct.
Anal. \textbf{4} (1969), 176-214.


























\end{thebibliography}
\end{document}